\DeclareMathOperator*{\argmax}{argmax}
\newcommand{\X}{\mathcal{X}}
\newcommand{\e}{e}
\def\eps{\varepsilon}
\newcommand{\chr}{\boldsymbol{\mathbbm{1}}} %
\newcommand{\pred}[1]{\chr\left[ #1 \right]}
\newcommand{\poly}{\operatorname{poly}}
\newtheorem{theorem}{Theorem}
\newtheorem{lemma}[theorem]{Lemma}
\newcommand{\beq}{\begin{eqnarray*}}
\newcommand{\eeq}{\end{eqnarray*}}
\newcommand{\beqn}{\begin{eqnarray}}
\newcommand{\eeqn}{\end{eqnarray}}
\newcommand{\ben}{\begin{enumerate}}
\newcommand{\een}{\end{enumerate}}
\newcommand{\bit}{\begin{itemize}}
\newcommand{\eit}{\end{itemize}}
\newcommand{\hide}[1]{}
\newcommand{\nrm}[1]{\left\Vert #1 \right\Vert}
\newcommand{\R}{\mathbb{R}}
\newcommand{\N}{\mathbb{N}}
\newcommand{\paren}[1]{\left( #1 \right)}
\newcommand{\set}[1]{\left\{ #1 \right\}}
\newcommand{\abs}[1]{\left| #1 \right|}
\renewcommand{\P}{\mathop{\mathbb{P}}}
\newcommand{\E}{\mathop{\mathbb{E}}}
\newcommand{\Nclose}{N_{\mathsf{close}}}
\newcommand{\Nfar}{N_{\mathsf{far}}}
\newcommand{\Aclose}{A_{\mathsf{close}}}
\newcommand{\Afar}{A_{\mathsf{far}}}
\newcommand{\inv}{^{-1}} %
\newcommand{\Hquad}{\hspace{0.5em}}
\begin{document}

\title{Fast and Bayes-consistent nearest neighbors}

\author{Klim Efremenko \\\texttt{klim@bgu.ac.il} \and Aryeh Kontorovich \\\texttt{karyeh@cs.bgu.ac.il}
\and Moshe Noivirt \\\texttt{noivirt.moshe@gmail.com}
}

\maketitle

\begin{abstract}
Research on nearest-neighbor methods tends to focus
somewhat dichotomously
either
on
the statistical or the computational aspects
---
either on, say, 
Bayes consistency and rates
of convergence
or on techniques for speeding up the proximity search.
This paper aims at bridging these realms:
to reap the advantages of fast evaluation time
while maintaining Bayes consistency,
and further
without sacrificing too much in the risk decay rate.
We combine the locality-sensitive hashing (LSH) technique
with a novel missing-mass argument
to obtain
a fast and Bayes-consistent
classifier.
Our algorithm's prediction runtime
compares favorably against
state of the art approximate NN methods,
while maintaining Bayes-consistency and attaining rates comparable to minimax.
On samples of size $n$ in $\R^d$,
our pre-processing phase
has runtime $O(d n \log n)$,
while the evaluation phase
has runtime $O(d\log n)$ per query point.
\end{abstract}

\section{Introduction}

In the sixty or so years since the introduction of the nearest neighbor
paradigm,
a large amount of
literature has been devoted to analyzing and refining
this surprisingly
effective classification method.
Although the $1$-NN classifier is not in general Bayes-consistent
\citep{CoverHart67},
taking a majority vote among the $k$ nearest neighbors
does guarantee Bayes consistency,  provided that $k$ increases
appropriately in sample size 
\citep{stone1977,MR780746,MR877849}.
However, the $k$-NN classifier presents issues of its own.
A naive implementation involves storing the entire sample,
over which a linear-time search is performed when
evaluating the hypothesis
on test points.
For large samples sizes, this approach is prohibitively expensive in terms
of storage memory and computational runtime.

Until recently, research on NN-based methods tended to focus
somewhat dichotomously
either
on
the statistical or the computational aspects.
On the statistical front,
the most commonly investigated questions involve Bayes consistency and rates
of convergence
under various distributional assumptions
\citep{samworth2008,kpo09,gadat2016,DBLP:journals/corr/ChaudhuriD14}.

An orthogonal
body of literature
developed a host of techniques for
evaluating the hypothesis (or an approximation to it)
on
test points in runtime considerably better than linear in sample size. 
In this setting,
exact NN search methods suffer from either space or query time that is exponential in the dimension $d$
\citep{samet2006foundations}.
To overcome this problem,
{\em approximate} NN search was proposed.
Broadly speaking, these techniques construct
a hierarchical net during the offline pre-processing (learning)
phase
\citep{KL04,BKL06,DBLP:journals/tit/GottliebKK14+colt},
or seek to {\em condense} the sample down to a smaller yet
nearly-faithful subsample
\citep{DBLP:journals/tit/Hart68,gates72,ritter75,wilson00,gkn-ieee18+nips},
or perform some sort of dimensionality reduction
\citep{indyk1998approximate,DBLP:conf/stoc/Charikar02,
datar2004locality,
  DBLP:journals/cacm/AndoniI08,GottliebKK13tcs+alt}.
The speedup in search time is offset by
a degraded classification accuracy, and
with rare exceptions \citep{DBLP:journals/tit/GottliebKK14+colt},
this tradeoff has not been addressed in the literature.

The aim of this paper is to combine the best of both worlds:
to reap the advantages of fast evaluation time
while maintaining Bayes consistency, with the risk decaying at a rate
not much worse than minimax.
We combine the {\em locality-sensitive hashing} (LSH) technique
of \citet{datar2004locality} with a novel missing-mass argument
to construct a fast, Bayes-consistent LSH-based classifier.

\paragraph{Our contribution.}
Our main contribution consists of
constructing
a fast and Bayes-consistent
classifier
in $\R^d$.
Our algorithm's prediction runtime
compares favorably against
state of the art approximate NN methods.
An additional advantage our method enjoys over
the latter is provable Bayes-consistency ---
and a convergence rate that is off by a power of $2$
from the minimax rate.
The concentration inequality
for a generalized notion of missing mass
developed in the course of our analysis
may be of independent interest.

\paragraph{Related work.}
Following the pioneering work of \citet{CoverHart67},  
it was shown by \citet{MR780746,MR877849} that the k-NN classifier is strongly Bayes-consistent.
Some of the classic results on $k$-NN risk decay rates were later refined 
by taking into account the noise margin, i.e., the data distribution around
the decision boundary.
In particular,
\citet{DBLP:journals/corr/ChaudhuriD14}
obtain
minimax
rates of the form $O(n^{-\frac{\alpha(\beta+1)}{2\alpha + d}})$,
where $\alpha$ is a H\"older-like smoothness exponent
of the regression function
$\eta(x) = \mathbb{P}(Y= 1 | X=x)$
and $\beta$ is a Tsybakov noise exponent.
To obtain this rate, they require
$k = \Theta(n^{\frac{2\alpha}{2\alpha+d}})$,
which slows down the query time by an additional
$\poly(n)$ factor.
A recently proposed alternative approach, based on sample compression and
$1$-NN classification has been shown to be Bayes-consisetnt in
doubling metric spaces
\citep{DBLP:conf/nips/KontorovichSW17} --- and in fact is universally consistent in all spaces
where Bayes consistency is possible \citep{hksw19}.

Various approximate NN techniques have been proposed to speed up the query time.
One such result was obtained by
\citet{har2012approximate},
who show that
$(r,cr,p_1,p_2)$-sensitive LSH families
(see definition below)
achieve an approximate NN query time of $O(dn^\rho)$,
where $\rho=\frac{\log(1/p_1)}{\log(1/p_2)}$.
Other approximation methods include fast $\eps$-net constructions \citep{KL04},
where query time
(after sample compression, as in \citet{gkn-ieee18+nips})
is $O(1/\eps^d)$ but does not depend on $n$.
No risk convergence (or even Bayes consistency)
analysis is known for any classifier using these methods
--- absent which, as we argue in the discussion below
Table~\ref{tab:comparison}, comparisons to our approach are not meaningful.

The recent work of \citet{DBLP:conf/aistats/XueK18} proposes
aggregating denoised $1$-NN predictors over a small number of
distributed subsamples.
This approach, which requires distributed computing resources,
can achieve nearly the accuracy of $k$-NN while matching
the prediction time of $1$-NN.
Since the present paper does not assume access to parallel processors,
this result is incomparable to ours.

\paragraph{Paper outline.} The structure of this paper is as follows.
Section \ref{prelim} contains the relevant definitions and notations.
Section \ref{main-res} discusses our main contributions.
In section \ref{sec:LSH} we present the LSH based learner algorithm.
Full detailed proofs are deferred to the supplementary material.

\section{Preliminaries}
\label{prelim}
\paragraph{Learning model.}
We work in the standard agnostic learning model \citep{mohri-book2012,shwartz2014understanding}, whereby the learner receives a sample $S$ consisting of $n$ labeled examples $\{(x_i,y_i)\}_{i=1}^n$ drawn iid from an unknown distribution $\mathcal{D}$ over $\mathcal{X}\times \mathcal{Y}$. 
In this work we take $\mathcal{X}=[0,1]^d$
equipped with an $\ell_p$ metric $\nrm{x-x'}_p^p=\sum_{i=1}^d|x_i-x'_i|^p$;
when the subscript $p$ is omitted, its default value is always $p=2$: $\nrm{\cdot}\equiv\nrm{\cdot}_2$.
For simplicity of exposition,
we take
$\mathcal{Y}=\{0,1\}$;
the extension to the multiclass case is straightforward\footnote{by replacing ``majority vote'' in Section~\ref{sec:LSH} by the plurality label,
as done in \citet{DBLP:conf/nips/KontorovichSW17}}.

Let $\mathcal{D}_{\mathcal{X}}$ denote the induced marginal distribution over $\mathcal{X}$ and let $\eta$ be the conditional probability over the labels:
$\eta(x)=\mathbb{P}(Y=1 | X=x)$.
This function is said to be $(\alpha,L)$-H\"older if
\beqn
\label{eq:holder}
|\eta(x)-\eta(x')|\le L\nrm{x-x'}_p^\alpha,
\qquad x,x'\in\X
.
\eeqn
Based on the training sample $S$,  the learner produces a hypothesis $h:\mathcal{X}\to \{0,1\}$ whose empirical error is defined by $\hat R_n(h)=\frac{1}{n}\sum_{i=1}^n \pred{h(x_i)\neq y_i}$ and whose generalization error is defined by $R(h) = \mathbb{P}(h(x)\neq y)$.
The Bayes-optimal risk is defined as
$R^* = \inf_h R(h)$,
where the infimum is over all measurable hypotheses.
This infimum is achieved by the Bayes-optimal classifier, $h^*$, given by
\beq
h^*(x) = \argmax\limits_{y\in\{0,1\}} \mathbb{P}(Y=y | X=x).
\eeq
A learning algorithm mapping a sample $S$ of size $n$ to a
hypothesis $h_n$ is said to be (weakly) Bayes-consistent
if $\lim_{n \to \infty}\mathbb{E}[R(h_n)] = R^*$.
(For {\em strong} Bayes consistency, the convergence is
almost-sure rather than in expectation, but this paper deals
with the former notion.)

\paragraph{Locality Sensitive Hashing.}
Let $\mathcal{H}$ be a family of {\em hash} functions mapping  a metric space
$( \mathcal{M} , \rho )$  to some set $U$.
The family $\mathcal{H}$ is called $(r,cr,p_1,p_2)$-{\em sensitive}
if  for any two points $ x,x'\in \mathcal{M}$, using a function $h\in \mathcal{H}$  which is drawn from some distribution $\mathbb{P}_{\mathcal{H}}$: 
\begin{itemize}
\item  $\rho(x,x')\leq r \implies
  \mathbb{P}_{\mathcal{H}}\Big(h(x)=h(x')\Big) \geq p_1$,
\item $\rho(x,x')\geq cr \implies \mathbb{P}_{\mathcal{H}}\Big(h(x)=h(x')\Big) \leq p_2
  $.
\end{itemize}
In order for a locality-sensitive hash (LSH) family to be useful, it
must satisfy inequalities $p_1 > p_2$ and $c>1$ \citep{datar2004locality}.

{
\paragraph{$k$-missing mass.}
For a sample $S=(X_1,\ldots,X_n)$
drawn iid from a discrete distribution $P=(p_1,p_2,\ldots)$
over $\N$, the {\em missing mass}
is the total mass of the atoms (i.e., points in $\N$) not appearing in $S$.
Let us define a generalized notion, the $k$-missing mass.
For $i,k \in [n]$ and $j\in \N$, define $\psi_{i,j}=\mathds{1}[X_i=j]$ and $\xi_j ^{(k)} = \mathds{1}[\Sigma_{i=1}^n \psi_{i,j} < k]$; in words, $\xi_j ^{(k)}$ is the indicator for the event that the $j$th atom was observed fewer than $k$
times. The $k$-missing mass is the following random variable:
\beqn
\label{eq:Udef}
U_n ^{(k)} = \sum\limits_{j \in \N} p_j \xi_j ^{(k)}
\eeqn
(for $k = 1$, this is the usual missing mass).
}

\section{Main Results}
\label{main-res}
Our first contribution is the construction of a sequence $\mathcal{H}_n$ of $(r_n,cr_n,p_{1},p_{2})$-{\em sensitive} families with the following properties:
\begin{itemize}
\item[S1.] $p_{1}^2 > p_{2}$  
\item[S2.] $r_n \to 0$ as $n \to \infty$
\item[S3.] $\frac{1}{r_n} = o(\sqrt{n})$.
\end{itemize}
Following \citet{datar2004locality},
our construction (given in Section~\ref{sec:lsh-family})
is based on $p$-stable distributions.

Using this construction, we design a learning algorithm
(Alg.~\ref{alg:lsh})
with runtime $O(dn \log{n})$, for the pre-processing phase and evaluation (online) runtime $O(d\log{n})$.
The pre-processing phase and evaluation times are compared to other algorithms in Table~\ref{tab:comparison}.

\begin{table}
    \begin{tabular}{l|l|l} %
      \textbf{Algorithm} & \textbf{Training time} & \textbf{Query time}\\
      \hline
      $k$-NN & $O(1)$ & $O(dkn)
      $ \\
      OptiNet
      & $O(dn^4)
      $ & $O(dn)
      $
      \\
      this paper
      &  $O(dn \log{n})$ & $O(d \log{n})$ \\
    \end{tabular}
    \caption{A comparison of the various algorithms' runtimes
     (OptiNet is given in Algorithm 1 of \citet{hksw19}).
      Note that while the query time of $k$-NN may be improved (e.g., to $O(dkn^{1/c^2})$
      using an LSH family) and the training time of
      OptiNet can be improved to $O(C_{d,\eps}n\log n)$ via fast $\eps$-net \citep{DBLP:journals/tit/GottliebKK14+colt},
      the effect of the approximate NN techniques on Bayes consistency is not understood --- much less the effect on
      the risk decay rates. Indeed, one can trivially speed up any learning algorithm by discarding all but
      a tiny fraction of the training sample. This will obviously significantly degrade the risk rate,
      which underscores that runtime comparisons are only meaningful among techniques with comparable risk rates.
    }
    \label{tab:comparison}  
\end{table}

In addition to achieving an exponential speed-up over the state of the art, our algorithm enjoys the property of being Bayes-consistent. The price we pay for the computational speedup is a quadratic slow-down of the convergence rate:
\begin{theorem}
\label{thm:const}
Let $\mathcal{X}=[0,1]^d, \mathcal{Y}=\{0,1\}$, and $\mathcal{D}$
be a distribution over $\mathcal{X}\times \mathcal{Y}$
for which the conditional probability function, $\eta$, is
$(\alpha,L)$-H\"older. Let $f_{n}$
denote the
classifier constructed by Algorithm~\ref{alg:lsh})
on
a sample $S_n \sim \mathcal{D}^n$.
Then
the LSH learner is weakly Bayes-consistent:
$\lim_{n\to\infty}\mathbb{E}[R(f_{n})] = R^* .$
Further,
$
\mathbb{E}[R(f_{n})] - R^*
=
O(n^{-\frac{\alpha}{2d+6}})$.
\end{theorem}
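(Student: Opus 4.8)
The plan is to decompose the excess risk into two contributions: the discretization/approximation error introduced by replacing the metric by the hash-bucket structure, and the statistical error from voting inside a bucket. Concretely, Algorithm~\ref{alg:lsh} effectively partitions (a subsample of) the training data into hash buckets and, on a query point $x$, outputs the majority label among the training points sharing $x$'s bucket (or some default when the bucket is light). The first step is to set up this viewpoint carefully: define, for a query $x$, the bucket $B(x)$, and compare $f_n(x)$ to the ``local Bayes rule'' $\mathds{1}[\eta(x)>1/2]$. Since $\eta$ is $(\alpha,L)$-H\"older and, by property S2, the scale $r_n\to 0$, any two points in a common bucket are close with high LSH-probability, so $\eta$ varies by at most $O(L r_n^\alpha)$ across a typical bucket; this is the bias term and it will contribute $O(r_n^\alpha)$ to the excess risk after the standard $\eta$-near-$1/2$ argument of Chaudhuri--Dasgupta / Stone.

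The second step is the variance term: conditioned on the bucket containing $k$ or more sample points, the majority vote concentrates around the bucket-averaged $\eta$, so its contribution to excess risk is controlled by (a) deviations of a Binomial-type majority vote, handled by a Hoeffding/Chernoff estimate that is small once the in-bucket count $k$ grows, and (b) the probability that the query lands in a bucket that received fewer than $k$ sample points. This last quantity is exactly a $k$-missing-mass event for the distribution induced on buckets, which is where the generalized missing-mass concentration inequality advertised in the introduction enters: $\E[U_n^{(k)}]$ must be shown to vanish as $n\to\infty$ for an appropriate choice $k=k_n\to\infty$. Balancing $k_n\to\infty$ (to kill the vote variance) against $k_n$ not too large (so that $U_n^{(k_n)}$ is still small, since the number of occupied buckets is of order $1/r_n^d$ and each needs $\gtrsim k_n$ hits) is the crux of the rate calculation.

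For the concrete rate, I would choose $r_n$ polynomially small — roughly $r_n \asymp n^{-c}$ for a small constant $c$ forced by S3 ($1/r_n=o(\sqrt n)$, so $c<1/2$) — so that the number of buckets is about $n^{cd}$, each bucket receiving on average $\approx n^{1-cd}$ points; then $k_n$ can be taken to grow like a small power of $n$ while keeping the $k$-missing mass $o(1)$. Collecting the bias term $O(r_n^\alpha)=O(n^{-c\alpha})$, the vote-variance term (exponentially small in $k_n$, hence negligible), and the missing-mass term, and optimizing $c$ subject to the constraint $cd<1$ forced by wanting each bucket heavy, yields an exponent of the form $\alpha/(2d+6)$ — the denominator $2d+6$ reflecting the interplay between the $d$-dimensional bucket count, the factor-$2$ LSH amplification gap (S1: $p_1^2>p_2$, so the effective probability of collision at the ``far'' scale costs a squaring), and the $\sqrt n$ ceiling from S3. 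The main obstacle is the missing-mass step: one cannot simply use the classical McAllester--Ortiz/Berend--Kontorovich bounds for the $k=1$ missing mass, since here $k=k_n$ grows; the bulk of the work is proving a concentration inequality for $U_n^{(k)}$ (presumably via a bounded-differences or self-bounding argument, noting that changing one sample point changes $U_n^{(k)}$ by at most a couple of atom-masses) and a sharp enough bound on $\E[U_n^{(k_n)}]$, together with verifying that the LSH family from Section~\ref{sec:lsh-family} genuinely realizes the partition picture with the stated collision probabilities. A secondary subtlety is handling the randomness of the hash functions themselves (the bucket structure is random, not just the sample), which I would address by conditioning on $\mathcal{H}_n$ and using S1 to ensure that with good probability collisions respect the metric up to the factor $c$.
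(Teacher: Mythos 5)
Your high-level decomposition is essentially the one the paper uses: an approximation (bias) term controlled by the $(\alpha,L)$-H\"older assumption and $r_n\to0$, a binomial-voting variance term inside a bucket, and a ``light-bucket'' failure probability controlled by the $k$-missing mass. The paper implements this through the plug-in inequality $\mathbb{E}[R(f)]-R^* \le 2\mathbb{E}|\hat\eta_n(x)-\eta(x)|$ of Devroye--Gy\"orfi--Lugosi rather than the Chaudhuri--Dasgupta ``$\eta$-near-$1/2$'' margin argument you allude to, then splits via $\eta^*(x)=\mathbb{E}[\eta(x')\,|\,x'\in A_{\mathsf{close}}(x)]$ and estimates $\mathbb{P}(N(x)<M)$ and $\mathbb{P}(N_{\mathsf{far}}(x)>\delta N(x))$ using Theorem~\ref{thm:mss} and the LSH amplification $m_n=\lfloor \log n / (2\log(1/p_1))\rfloor$ (so $p_1^{m_n}\approx n^{-1/2}$ and $p_2^{m_n}\le p_1^{2m_n}\approx n^{-1}$). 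So the skeleton is right.

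The genuine gap is in your plan for the concentration of $U_n^{(k)}$. A bounded-differences argument does not suffice: changing one sample point flips at most two indicators $\xi_j^{(k)}$, so the difference constants are $c_i \le 2\max_j p_j$, and McDiarmid gives an exponent $\Theta\bigl(\eps^2/(n\,(\max_j p_j)^2)\bigr)$, which is not dimension-free and is far weaker than the required $\Theta(n\eps^2/k)$. (Self-bounding also doesn't straightforwardly apply, since removing a sample point can only \emph{increase} the indicators.) The paper instead exploits the \emph{negative association} of the $\xi_j^{(k)}$ (McAllester--Ortiz), which licenses the Chernoff MGF argument with \emph{independent} surrogate Bernoullis, then splits atoms by whether $np_j\gtrless k$: for the heavy atoms it bounds the MGF of each summand via the Kearns--Saul inequality (Lemma~\ref{lem:mgf}), while for the light atoms Hoeffding plus $\sum_{np_j<k}p_j^2\le k/n$ suffices; balancing the two gives the dimension-free $\exp(-0.09\,n\eps^2/k)$. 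Relatedly, your heuristic for the $2d+6$ denominator (``$d$-dimensional bucket count, factor-2 amplification, $\sqrt n$ ceiling'') is not how the exponent actually arises: it comes from tuning the in-bucket threshold $k=n^s$ and the radius $r_n\sim n^{-(1-s)/(d+1)}$ so that the missing-mass expectation term $n^{-(1-s)/(d+1)}$ balances against the light-bucket/vote term $n^{-(s-1/2)/2}$; solving $(1-s)/(d+1)=(s-1/2)/2$ yields $s=(d+5)/(2d+6)$ and hence the H\"older term $r_n^\alpha\sim n^{-\alpha/(2d+6)}$.
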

\paragraph{Remark.}
Since we rely on the LSH techniques developed by \citet{indyk1998approximate,datar2004locality,DBLP:journals/cacm/AndoniI08},
it might appear that we are
``beating them at their own game'' by achieving an exponential speedup over the state-of-the-art runtimes based on LSH.
A more accurate conceptual explanation would be that we are ``playing a different  game''. Namely,
while the latter works
focus on the approximate nearest neighbor problem, our goal is rather to efficiently label a test point,
without guaranteeing anything about its approximate nearest neighbor in the sample.
Instead, we guarantee that with high probability,
most of the points
in
a query point's
hash bucket will be
in its close proximity.

\paragraph{Open problem.} Is
there
an NN-based classification
algorithm with query evaluation time $O(d\log{n})$ that achieves,
under the conditions of
Theorem~\ref{thm:const},
the minimax risk rate
of
$O(n^{-\frac{\alpha}{2\alpha + d}})$?

Our analysis is facilitated by
a bound on the $k$-missing mass of possible independent interest:
\begin{theorem}
\label{thm:mss}
Let $U_n^{(k)}$ be the missing mass variable defined in  (\ref{eq:Udef}). For $\eps>0$, $n\in\N$ and $1\leq k \leq n$, we have
\begin{itemize}
  \item[(a)]
$\mathbb{E}[U_n^{(k)}] < 1.6{\nrm{P}_0k}/{n}$,
    where\\ $\nrm{P}_0 = \sum_{j\in \N} \mathds{1}[p_j >0]$ is the support size of $P$;
  \item[(b)]
$\mathbb{P}\big(U_n^{(k)} >\mathbb{E}[U_n^{(k)}] +\eps \big) \leq 2\exp\paren{-0.09{n\eps^2}/{k}}$.
\end{itemize}
\end{theorem}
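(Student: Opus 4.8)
The plan is to treat parts (a) and (b) separately, in both cases reducing $U_n^{(k)}=\sum_{j}p_j\xi_j^{(k)}$ to a sum of (nearly) independent per-atom contributions. For part (a), write $\mathbb{E}[U_n^{(k)}]=\sum_j p_j\,\mathbb{P}(N_j\le k-1)$ with $N_j=\sum_{i=1}^n\psi_{i,j}\sim\mathrm{Bin}(n,p_j)$, and use the elementary identity $p_j\binom{n}{t}p_j^{t}(1-p_j)^{n-t}=\frac{t+1}{n+1}\,\mathbb{P}\big(\mathrm{Bin}(n+1,p_j)=t+1\big)$, obtained by absorbing the extra $p_j$ into one more Bernoulli trial. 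Summing over $0\le t\le k-1$ gives $p_j\,\mathbb{P}(N_j\le k-1)=\frac{1}{n+1}\,\mathbb{E}\big[M_j\,\mathds{1}[1\le M_j\le k]\big]\le\frac{k}{n+1}\mathbb{P}(M_j\ge1)\le\frac{k}{n+1}$ with $M_j\sim\mathrm{Bin}(n+1,p_j)$; summing over the $\nrm{P}_0$ atoms with $p_j>0$ yields $\mathbb{E}[U_n^{(k)}]\le k\nrm{P}_0/(n+1)$, well inside the claimed bound. (A more hands-on alternative: split the atoms by whether $np_j$ is below or above a fixed multiple of $k$, bound $p_j\mathbb{P}(N_j<k)\le p_j$ on the light group, and use a Chernoff bound for $\mathbb{P}(N_j<k)$ on the heavy group.)

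For part (b), the structural observation is that the indicators $\xi_j^{(k)}=\mathds{1}[N_j<k]$ are negatively associated: the multinomial counts $(N_j)_j$ are NA, and each $\xi_j^{(k)}$ is a monotone function of the single coordinate $N_j$. Hence the moment generating function factorizes, $\mathbb{E}\big[e^{\lambda U_n^{(k)}}\big]\le\prod_j\mathbb{E}\big[e^{\lambda p_j\xi_j^{(k)}}\big]=\prod_j\big(1+q_j(e^{\lambda p_j}-1)\big)$ for $\lambda>0$, where $q_j:=\mathbb{P}(N_j<k)$. (Alternatively one may Poissonize, replacing $n$ by an independent $\mathrm{Poisson}(n)$: monotonicity of $U_m^{(k)}$ in $m$ together with $\mathbb{P}(\mathrm{Poisson}(n)\le n)\ge\frac12$ gives $\mathbb{P}(U_n^{(k)}>t)\le 2\,\mathbb{P}(\widetilde U>t)$ for the Poissonized variable $\widetilde U$, which accounts for the factor $2$; the Poissonized and fixed-$n$ means differ only by a standard negligible amount.) Taking logarithms, using $\ln(1+x)\le x$ and $\mathbb{E}[U_n^{(k)}]=\sum_j p_j q_j$, the Chernoff bound gives, for every $\lambda>0$,
\[
\mathbb{P}\big(U_n^{(k)}>\mathbb{E}[U_n^{(k)}]+\eps\big)\ \le\ \exp\!\Big(-\lambda\eps+\sum_j q_j\big(e^{\lambda p_j}-1-\lambda p_j\big)\Big).
\]

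The remaining task is to bound $\Phi(\lambda):=\sum_j q_j(e^{\lambda p_j}-1-\lambda p_j)$ by $O\big(\frac{k}{n}\lambda^2\big)$ for $\lambda$ up to $\Theta(n\eps/k)$; optimizing $\lambda$ over this range then yields $\exp\big(-\Theta(n\eps^2/k)\big)$. The key auxiliary estimate is the ``variance'' bound $\sum_j p_j^2 q_j=O(k/n)$, proved by splitting the atoms according to whether $np_j<2k$ (then $p_j<2k/n$, so $p_j^2 q_j\le p_j^2=O\big(\frac{k}{n}p_j\big)$) or $np_j\ge2k$ (then $k<np_j/2$, so the Chernoff bound $q_j\le e^{-np_j/8}$, or a sharper Chernoff--KL bound, makes $(np_j)^2 q_j=O(np_j)$), and summing with $\sum_j np_j=n$. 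The same light/heavy dichotomy controls $\Phi$ itself: on the light atoms use $e^x-1-x\le\frac{x^2}{2}e^x$ with the small argument $\lambda p_j\le 2\lambda k/n$ together with the variance bound, and on the heavy atoms use the exponentially small $q_j$, the count bound $\#\{j:np_j\ge2k\}<n/(2k)$, and $\sum_j np_j=n$ to see their aggregate contribution is also $O\big(\frac{k}{n}\lambda^2\big)$.

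I expect the heavy atoms to be the main obstacle. Since an individual $p_j$ can be a constant, applying Bernstein's inequality (or McDiarmid's bounded-differences inequality) to $\sum_j p_j\xi_j^{(k)}$ with the worst-case increment $\max_j p_j$ is far too lossy: it yields only a sub-exponential tail $\exp(-\Omega(n\eps))$, or leaves a spurious additive $\Omega(n/k)$ in the exponent, neither of which gives the required $\exp(-\Omega(n\eps^2/k))$ for small $\eps$. The resolution is to never union-bound over the heavy atoms and never extract their range, but to keep them inside the single factorized moment generating function, where the exponentially small undersampling probabilities $q_j\le e^{-np_j/8}$ --- combined with there being at most $n/(2k)$ such atoms and their total mass being at most $1$ --- make their cumulant contribution $O\big(\frac{k}{n}\lambda^2\big)$ after all. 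Carrying the constants through this bookkeeping and choosing $\lambda=\Theta(n\eps/k)$ is what fixes the particular value $0.09$; and since the claimed bound exceeds $1$ whenever $n\eps^2/k=O(1)$, the argument is only needed for $\eps\gtrsim\sqrt{k/n}$, which leaves a little room in the estimates.
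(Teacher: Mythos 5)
Your part~(a) argument is correct and in fact cleaner and sharper than the paper's. Instead of the paper's route — decompose $U_n^{(k)}$ by whether $np_j\ge k$, apply Chernoff to the heavy half, and then do a calculus optimization of $p\mapsto p\exp(-(np-k)^2/2np)$ — you use the identity $p_j\binom{n}{t}p_j^t(1-p_j)^{n-t}=\frac{t+1}{n+1}\P(\mathrm{Bin}(n+1,p_j)=t+1)$ to get $p_j\P(N_j<k)\le k/(n+1)$ per atom, hence $\E[U_n^{(k)}]\le k\nrm{P}_0/(n+1)$. This is a strictly better bound than the paper's $1.6k\nrm{P}_0/n$, with a one-line proof. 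Worth keeping.

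For part~(b), the overall frame is the same as the paper's: the $\xi_j^{(k)}$ are negatively associated (being coordinatewise monotone functions of the NA multinomial counts), so the MGF factorizes and the problem reduces to bounding a single cumulant sum. But your specific estimate for the \emph{heavy} atoms has a gap. You propose to bound their cumulant contribution $\sum_{j:\,np_j\ge 2k}q_j\bigl(e^{\lambda p_j}-1-\lambda p_j\bigr)$ by combining $q_j\le e^{-np_j/8}$ with the count bound $|\{j:np_j\ge 2k\}|\le n/(2k)$ and $\sum_j p_j\le 1$. Tracing this through: for $\lambda\le n/16$ you get $q_j e^{\lambda p_j}\le e^{-np_j/16}$, so each term is at most $e^{-np_j/16}\le e^{-k/8}$, and summing over the at-most-$n/(2k)$ heavy atoms yields $\frac{n}{2k}e^{-k/8}$. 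For $k=1$ this is $\approx 0.44\,n$, which is nowhere near $O(k\lambda^2/n)$ for the optimal $\lambda=\Theta(n\eps/k)$ unless $\eps$ is essentially a constant. The problem is that atoms with $np_j$ just above $2k$ have $q_j$ bounded \emph{away} from $0$ (e.g.\ $q_j\approx e^{-1/4}$ when $k=1,\,np_j=2$), and there can be $\Theta(n)$ of them; the ``exponentially small $q_j$'' never kicks in. The fix is that you must retain the quadratic prefactor $e^x-1-x\le\tfrac{x^2}{2}e^x$ on the heavy atoms too, writing $q_j(e^{\lambda p_j}-1-\lambda p_j)\le\tfrac{\lambda^2}{2}\,p_j^2\,q_j e^{\lambda p_j}\le\tfrac{\lambda^2}{2}\,p_j^2\,e^{-np_j/16}$ and then maximizing $p\mapsto p^2 e^{-np/16}$; only the product of the quadratic and the exponential gives the $O(k\lambda^2/n)$ scaling. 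The paper sidesteps all of this by invoking the Kearns--Saul inequality, which delivers a per-atom MGF bound $\Phi(\lambda,k,n,p)\le\exp(C_\Phi\lambda^2 pk/n)$ directly, so that summing against $\sum_j p_j\le1$ immediately gives the cumulant bound. Even once you patch the heavy-atom step, the constants from your route come out considerably worse than $0.09$ (my rough tracing gives something in the $0.005$--$0.01$ range), so you would only recover the theorem up to the constant. Finally, two small notes: the factor $2$ in the paper's bound arises from the $X+Y$ union bound, not Poissonization, so if you use NA directly you should not expect (or need) that factor; and your Poissonization aside would also need to account for the shift in $\E[U_n^{(k)}]$ between the fixed-$n$ and Poissonized models, which is not entirely negligible at the level of constants.
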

\paragraph{Remark.}
Lemma 16.6 in \citet{shwartz2014understanding} claims the bound
$\mathbb{E}[U_n^{(k)}] \le 2{\nrm{P}_0k}/{n}$ for $k\ge2$. The proof is an
exercise, but a sketch is provided. Since we provide a complete proof (via a
different method), with a better constant and without restricting the range of $k$,
we decided to include part (a) above.
The concentration result in (b) is, to our knowledge, novel.

\section{LSH based Learner}
\label{sec:LSH}
Our LSH-based algorithm (presented formally in Alg.~\ref{alg:lsh}) operates as follows. Given a sample $S_n$ of size $n$,
we set the radius parameter $r_n$, and pick $m_n = O(\log{n})$ functions
$\set{h_i}$
from an LSH family $\mathcal{H}_n$, and define $g_n(x)=(h_1(x),\ldots,h_{m_n}(x))$.
Using $g_n$ we then
we construct the hash table $T$,
which contains the training set $S_n$,
and each bucket is labeled according to the majority vote among the labels of the $x_i$'s falling into the bucket. Technically, this is done by taking a single pair, which agrees with the
majority vote, $(x_i, y_i)$, from the bucket, and inserting it into
a new table $T'$, using the same hash function $g_n$.
The LSH learner runs in $O(dn \log{n})$, and its output is
a
classifier defined by a
(table, hash function) pair.

We denote by $|T|$ the size of the table, namely, the number of buckets in $T$. We use $|T(k)|$ to denote the number of elements in the bucket whose key is $k$. The number of buckets can be reduced, by retaining only the nonempty buckets using (standard) hashing of the values $g_n(x)$. However, in this work we use single hashing.

\begin{algorithm}[ht]
  \caption{LSH based
    learner}\label{alg:lsh}
  \begin{algorithmic}[1]
    \Require
      \Statex Sample $S_n = \{(x_i,y_i)\}_{i=1}^n$
    \Ensure
      \Statex LSH based classifier 
    \Statex
    \State set $m_n = \lfloor\frac{\log{n}}{2\log{\frac{1}{p_1}}}\rfloor$
    \State pick $m_n$ functions from $\mathcal{H}_n$ where $\mathcal{H}_n$ is as in Section \ref{sec:lsh-family}
  	\State Initialize empty hash tables $T,T'$
    \State set $g_n=(h_1,\ldots,h_{m_n})$
    \For{$i=1 \to n$}
   		\State add $(x_i,y_i)$ to $T(g_n(x_i))$
    \EndFor
    \For{bucket $j$ in $T$}
   		\If{$\sum\limits_{(x_i,y_i)\in T(j)}y_i > \frac{|T(j)|}{2}$}
   			\State find $(x',y')\in T(j)$ s.t. $y' = 1$
   			\State add $(x',y')$ to $T'(g_n(x'))$
   		\Else
   			\State find $(x',y')\in T(j)$ s.t. $y' = 0$
   			\State add $(x',y')$ to $T'(g_n(x'))$
   		\EndIf
                \EndFor
	\State return $(T',g_n)$
  \end{algorithmic}
\end{algorithm}

To label 
a test point $x$, we need to access the label in $T'(g_n(x))$.
This can be done in time $O(d \log{n})$ (see Algorithm \ref{alg:algorithm2}).

\begin{algorithm}[ht]
\caption{LSH based classifier $f_{T',g_n}$}\label{alg:algorithm2}
  \begin{algorithmic}[1]
    \Require
    \Statex hash table $T'$
    \Statex hash function $g_n$
     \Statex test point $x\in\mathcal{X}$
	\If{$T'(g_n(x))$ is not empty}
    	\State $(x',y') \gets$ retrieve element from $T'(g_n(x))$
    	\State return $y'$
    \Else
    	\State return default label $0$   
    \EndIf
  \end{algorithmic}
\end{algorithm}

\subsection{LSH familiy} \label{sec:lsh-family}
The term Locality-Sensitive Hashing (LSH) was introduced by \citet{indyk1998approximate}
to describe
a randomized hashing framework for efficient approximate nearest neighbor
search in high-dimensional space. It is based on the definition of LSH
family $\mathcal{H}$, a family of hash
functions mapping similar input items to the same hash code with higher probability than dissimilar items. Our LSH learner is using the following family, proposed by
\citet{datar2004locality}.
For the Euclidean metric we pick a random projection of $\mathbb{R}^d$ onto a 1-dimensional line and chop the line into segments of length $w$, shifted by a random value $b \in [0, w)$. Formally, $h_{\alpha,b}(x) = \lfloor \frac{\alpha x + b}{w} \rfloor$, where the projection vector $\alpha \in \mathbb{R}^d$ is constructed by picking
  each coordinate of $\alpha$ from the
standard normal $N(0,1)$
  distribution.
  The choice of $w$ is made according to the sample size. A generalization of this approach to $\ell_p$ norms for any $p\in(0, 2]$ is possible as well; this is done by picking the vector $\alpha$ from so-called $p$-stable distribution. We compute the probability that two vectors $v_1, v_2 \in \mathbb{R}^d$ collide under a hash function
drawn from this family. For the two vectors, let $z = \nrm{v_1-v_2}_p$ and let $P(z)$ denote the probability
that $v_1, v_2$ collide for a hash function chosen from the family $\mathcal{H}$ described above. For a random vector $\alpha$ whose entries are drawn from a $p$-stable distribution, $\alpha v_1 - \alpha v_2$ is distributed as $zX$ where $X$ is a random variable drawn from a $p$-stable distribution. We get a collision if both $|\alpha v_1 - \alpha v_2|< w$ 
and a divider does not fall between $\alpha v_1$ and $\alpha v_2$. It is easy to see that
\beq
\mathbb{P}(h(v_1)=h(v_2))=P(z)=\int_{0}^\frac{w}{z}  \phi_p(t)(1-\frac{tz}{w})dt ,
\eeq
where $\phi_p$ is the density of the absolute value of the $p$-stable distribution. Notice that for a fixed $w$, this probability depends only on the distance $z$, and it is monotonically decreasing in $z$. Finally, given a sample $S$ of size $n$, we set 
\beq
w = \Big(\frac{1.6{d}^{(d+2)/2}}{n^{\frac{d+1}{2d+6}}}\Big)^{\frac{1}{d+1}}
.
\eeq
Choosing $r_n=w$, we get 
\begin{align*}
&p_{1}=P(r_n)=\int_{0}^1  f(t)(1-t)dt, \\
&p_{2}=P(cr_n)=\int_{0}^\frac{1}{c}  f(t)(1-ct)dt.
\end{align*}
For example, for the Euclidean norm,
we have
$\phi_p(t)=\frac{2}{\sqrt{2\pi}}e^{- \frac{t^2}{2}}$ and $c= 3$,
which induces
a $(r_n, 3r_n, p_{1},p_{2})$-sensitive family with
\begin{align*}
&p_1 = P(r_n) \approx 0.367691,\\
&p_2 = P(3r_n) \approx 0.131758. 
\end{align*}

More generally, our Bayes consistency results hold for the LSH learner
whenever the $(r_n,cr_n,p_{1},p_{2})$-sensitive family $\mathcal{H}_n$
satisfies the properties S1-S3.

\section{Proof of Theorem \ref{thm:mss}(a)}

\paragraph{Remark.}
As shown in 
\citet{Berend20121102},
even for $k=1$,
one cannot, in general, obtain estimates on
$\E[U_n^{(k)}]$ independent of the support size
--- unlike concentration bounds,
which are dimension-free.
\begin{proof}
Decompose
$U_n^{(k)}=X+Y$, where
\beqn
\label{eq:XY}
X = \sum_{j: k\le np_j}p_j\xi_j^{(k)},
\qquad
Y = \sum_{j: k> np_j}p_j\xi_j^{(k)}.
\eeqn
Then $\E[U_n^{(k)}]=\E[X]+\E[Y]$ and
\beq
\E[\xi_j^{(k)}] = \P(\mathrm{Bin}(n,p_j)<k) = \sum_{\ell=0}^{k-1}{n\choose\ell}p_j^\ell(1-p_j)^{n-\ell}.
\eeq
For $k\le np_j$,
the multiplicative Chernoff bound
$\P(\mathrm{Bin}(n,p)<(1-\delta)np)\le\exp(-\delta^2np/2)$
yields
$\E[\xi_j^{(k)}] \le\exp\paren{-\frac{(np_j-k)^2}{2np_j}},$
whence
\beqn
\label{eq:EX}
\E[X] \le \sum_{j: k\le np_j}p_j\exp\paren{-\frac{(np_j-k)^2}{2np_j}}.
\eeqn
We estimate this quantity via the simple strategy
of
maximizing each summand independently over $p_j$.
To this end,
define the function
$F(p)=p\exp\paren{-\frac{(np-k)^2}{2np}}$
over
$p\in[k/n,1]$
and compute
\beq
F'(p) = \frac{\exp\paren{-\frac{(np-k)^2}{2np}}(k^2+np(2-np))}{2np}.
\eeq
The latter vanishes at
\beq
p\in\set{p_+,p_-}:=
\frac{1\pm\sqrt{1+k^2}}{n}
,
\eeq
of which only $p_+$ lies in the permitted range $[k/n,1]$.
Since for $k\le n$ we always have $k^2<n(n+2)$,
it follows that $F'(1)<0$,
and hence either $p_+\le1$ maximizes $F$ over $[k/n,1]$
or else $p_+>1$ (which happens iff $k^2>n(n-2)$) and
$F$ is maximized at $p=1$.
We shall analyze both cases.
For the first case,
it is a simple exercise to show that
\beq
\frac{(np_+-k)^2}{2np_+}=
\frac{(1+\sqrt{k^2+1}-k)^2}{2(1+\sqrt{k^2+1})}
\ge
\frac1{(1+\sqrt2)k}
\eeq
and hence
\beq
\frac{
nF(p_+)
}{k}
\le
\frac{
(1+\sqrt{k^2+1})
  \exp(-[(1+\sqrt2)k]\inv)
  }{k}
=:G(k).
\eeq
We claim that $G$ is monotonically decreasing in $k$.
Indeed,
$
k^3\sqrt{k^2+1}
e^{\frac{
      \sqrt2-1
    }{k}
    }
[\sqrt2-1]\inv
G'(k)
=$
\beq
k^2+1+\sqrt{k^2+1}
  -
  (\sqrt2+1)k(1+\sqrt{k^2+1})
  <0,
  \eeq
  which follows readily from
  $k\le \sqrt{k^2+1}\le k+\sqrt2-1$,
  for $k\ge1$.
  Thus,
\beq
G(k)\le G(1)=
(1+\sqrt2)\exp(-[1+\sqrt2]\inv])
<1.595457,
\eeq
whence 
\beqn
\label{eq:p+}
F(p_+) < 1.6k/n.
\eeqn
For the second case,
which requires bounding $F(1)$,
we claim
that
\beqn
\label{eq:p=1}
\sup_{n\ge1}\sup_{k\in[1,n]}
\exp\paren{-\frac{(n-k)^2}{2n}}
< 1.56k/n.
\eeqn
Indeed, putting $x=k/n$, we can define
$G(x)=\exp\paren{-\frac{n^2(1-x)^2}{2n}}/x$
and verify that $G'(x)<0$ on $[1/n,1]$.
Thus, the extreme value
of $\exp(-1/4)/2\approx1.56$
in (\ref{eq:p=1})
is achieved at
$n=2$ and $k=1$.

It follows
from (\ref{eq:p+}) and (\ref{eq:p=1})
that
\beq
\E[X] \le 1.6k/n\cdot\abs{\set{j\in\N: p_j\ge k/n}}.
\eeq

The upper bound on $\E[Y]$ is trivial:
\beq
\E[Y] = \sum_{j: k> np_j}p_j\E[\xi_j^{(k)}]
\le k/n\cdot\abs{\set{j\in\N:
    p_j>0
}}.
\eeq
Combining the estimates on $\E[X]$ and $\E[Y]$
concludes the proof.
\end{proof}

\section{Proof of Theorem \ref{thm:mss}(b)}

We begin by observing that the random variables
$\xi_j^{(k)}$, though not independent, are
{\em negatively associated}, as shown in \citet{DBLP:journals/jmlr/McAllesterO03}.
Thus, for the purpose of establishing concentration,
one may invoke the standard
Bernstein-Chernoff
exponential bounding argument verbatim
\citep{Dubhashi:1998:BBS:299633.299634}. We shall do so in the sequel
without further comment.

We maintain the decomposition
$U_n^{(k)}=X+Y$ as in (\ref{eq:XY})
and derive concentration bounds on
$X$ and $Y$ separarately.
A bound for
$U_n^{(k)}$ will then follow via
\begin{align}
\nonumber
&\P(U_n^{(k)}\ge\E[U_n^{(k)}]+\eps) 
\le \\
&
\label{eq:UXY}
\P(X\ge\E[X]+\alpha\eps)
+
\P(Y\ge\E[Y]+(1-\alpha)\eps),
\end{align}
for any choice of $0\le\alpha\le1$.

\subsubsection*{Tail bounds for $X$}
In this section, we always assume that
$n\ge1$, $p\in[0,1]$ and $1\le k\le np$.
Define the function
$q=q(k,n,p):=\exp\paren{-\frac{(np-k)^2}{2np}}$
and the collection of independent
Bernoulli variables $\xi'_j\sim\mathrm{Ber}(q(k,n,p_j))$,
as well as $X':=\sum_{j: k\le np_j}p_j \xi'_j$.
It follows
from (\ref{eq:EX}) that
$\E[X]\le\E[X']=
\sum_{j: k\le np_j}p_j q(k,n,p_j)$
and from
negative association that
\beqn
\label{eq:XX'}
\P(X\ge\E[X]+\eps)\le\P(X'\ge\E[X']+\eps),\qquad\eps>0.
\eeqn

Our strategy
for bounding (\ref{eq:XX'})
is to bound the moment generating function
$\E\exp[\lambda(X'-\E[X'])]$ --- to which end,
it suffices to bound
\beqn
\nonumber
\E\e^{\lambda p_j(
\xi_j'
  -\E[
\xi_j'
  ])}
&=&
 q(k,n,p_j)\e^{\lambda p_j(1-q(k,n,p_j))}
 \\\nonumber &+&(1-q)\e^{-\lambda p_jq(k,n,p_j)}
\label{eq:phidef}
\\ & =:&\Phi(\lambda,k,n,p_j).
\eeqn
\begin{lemma}
  \label{lem:mgf}
  For $\Phi$ as defined in (\ref{eq:phidef}),
  \beq
  \Phi(\lambda,k,n,p)
  \le
\exp(C_\Phi\lambda^2pk/n),
\eeq
where $C_\Phi
\le
(2+\sqrt3)/4\log(\e-1)<1.73$
is a universal constant.\footnote{
Numerical simulations suggest that $C_\Phi<0.61$.
  }
\end{lemma}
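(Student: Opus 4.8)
The plan is to bound $\Phi(\lambda,k,n,p)$ by first treating it as a function of the single "collision-deficit" probability $q = q(k,n,p)\in[0,1]$, and only afterwards substituting the explicit Chernoff form of $q$. Writing $a := \lambda p$ for brevity, we have $\Phi = q\,\e^{a(1-q)} + (1-q)\e^{-aq}$, which is exactly $\e^{-aq}\bigl(q\e^{a} + 1-q\bigr)$. Taking logarithms, $\log\Phi = -aq + \log\bigl(1 + q(\e^{a}-1)\bigr) \le -aq + q(\e^{a}-1) = q\,(\e^{a}-1-a)$, using $\log(1+x)\le x$. Now I would invoke the elementary inequality $\e^{a}-1-a \le C a^2$ valid on the relevant range of $a$; since $p\le 1$ and one can assume $\lambda$ lies in a bounded range (the Bernstein argument only needs the MGF bound for $\lambda$ up to some constant over the relevant scale), the constant $C$ can be pinned down — the stated value $(2+\sqrt3)/4\log(\e-1)$ presumably comes from optimizing over exactly such a range, e.g. requiring the bound on $[0,1]$ so that $\e^{a}-1-a\le(\e-2)a^2\le\cdots$, or a slightly larger interval chosen to make the final concentration constant $0.09$ come out. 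This yields $\log\Phi \le C\,q\,a^2 = C\lambda^2 p^2 q$.

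The remaining task is to convert the factor $p^2 q$ into the claimed $pk/n$, i.e. to show $p\,q(k,n,p) \le C'\,k/n$ for a universal constant $C'$ on the regime $1\le k\le np$. This is precisely the same one-variable calculus that already appears in the proof of Theorem~\ref{thm:mss}(a): there the function $F(p) = p\exp\bigl(-\tfrac{(np-k)^2}{2np}\bigr) = p\,q$ was maximized over $p\in[k/n,1]$ and shown (in equations~(\ref{eq:p+}) and~(\ref{eq:p=1})) to be at most $1.6\,k/n$. So I would simply reuse that bound: $p\,q(k,n,p)\le 1.6\,k/n$, giving $\log\Phi \le 1.6\,C\,\lambda^2\,p\,k/n$, and then absorb $1.6\,C$ into the universal constant $C_\Phi$, checking that the stated numerical value $(2+\sqrt3)/4\log(\e-1) < 1.73$ indeed dominates $1.6\,C$ for the chosen range of $\lambda$.

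The main obstacle is bookkeeping of constants rather than any conceptual difficulty: one must fix the admissible range of $\lambda$ (equivalently of $a=\lambda p$) consistently with what the downstream Bernstein argument in Section~6 actually uses to derive the $\exp(-0.09\,n\eps^2/k)$ tail, and verify that on that range the quadratic bound $\e^a-1-a\le C a^2$ holds with a small enough $C$ that $1.6\,C \le C_\Phi$. A secondary subtlety is the step $\log(1+q(\e^a-1))\le q(\e^a-1)$: this is valid for $\e^a-1\ge 0$, i.e. $a\ge 0$, which is fine if we only need the upper-tail MGF for $\lambda\ge 0$; for the two-sided concentration one also needs $\lambda<0$, where $\e^a-1<0$ and the same inequality $\log(1+x)\le x$ still applies, so the bound $\log\Phi\le q(\e^a-1-a)$ persists, and $\e^a-1-a\le Ca^2$ holds on a symmetric interval around $0$ as well. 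Once the range of $\lambda$ is fixed these are all routine verifications.
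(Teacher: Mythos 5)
There is a genuine gap. After the step $\log\Phi \le q(\e^a-1-a)$ with $a=\lambda p$, you invoke $\e^a-1-a \le Ca^2$, and you acknowledge this requires $a$ to lie in a bounded range, hoping the downstream Bernstein argument only needs bounded $\lambda$. But it does not: in the proof of Theorem~\ref{thm:mss}(b) the MGF bound is used at the optimizing value $\lambda=\eps n/(2kC_\Phi)$, which is unbounded as $\eps$, $n$, or $n/k$ grow, and the summands with $p_j$ near $1$ then have $a=\lambda p_j$ arbitrarily large. For such $a$, your intermediate bound $q(\e^a-1-a)$ is genuinely exponential in $\lambda$ (the Chernoff factor $q=\exp(-(np-k)^2/2np)$ does not depend on $\lambda$, so it cannot compensate), whereas the lemma asserts a quadratic, sub-Gaussian bound $\exp(C_\Phi\lambda^2 pk/n)$ for all $\lambda$. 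So the inequality $\e^a-1-a\le Ca^2$ is not a matter of constant bookkeeping; it is false in the regime the argument actually needs, and no choice of $C$ repairs it.

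The underlying issue is that $\log(1+x)\le x$ discards exactly the structure that makes the Bernoulli MGF sub-Gaussian. The paper instead invokes the Kearns--Saul inequality, which bounds $q\e^{t(1-q)}+(1-q)\e^{-tq}$ by $\exp\bigl(\tfrac{(1-2q)t^2}{4\log((1-q)/q)}\bigr)$ for \emph{all} real $t$, giving a bona fide sub-Gaussian bound with a $q$-dependent variance proxy. The remaining work in the paper's proof is then to show $\tfrac{(1-2q)}{\log((1-q)/q)}\,p \le 4C_\Phi k/n$ over the feasible range, carried out via the reparametrization $\mu=np$, $x=k/\mu$, $z=\log(1/y)$ and a two-case analysis. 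Your proposed reduction via $p\cdot(pq)\le p\cdot 1.6k/n$ is a nice observation, but it targets the wrong quantity once Kearns--Saul is in play (one needs to control $\tfrac{1-2q}{\log((1-q)/q)}$, not $q$ itself), and the stated constant $(2+\sqrt3)/4\log(\e-1)$ has nothing to do with optimizing $\e^a-1-a\le Ca^2$ over an interval. To repair the proof you would need to replace the $\log(1+x)\le x$ step by Kearns--Saul (or another sub-Gaussian MGF bound for a bounded random variable, e.g.\ Hoeffding's lemma) and then carry out the ratio analysis.
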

Armed with Lemma~\ref{lem:mgf},
the standard argument yields an estimate on 
(\ref{eq:XX'}):
\beq
\P(X'\ge\E[X']+\eps) &=&
\P(\exp(\lambda(X'-\E[X']))\ge\e^{\lambda\eps})\\
&\le&
\e^{-\lambda\eps}
\prod_{j: k\le np_j}
\E\e^{\lambda p_j(
\xi_j'
  -\E[
\xi_j'
  ])}
\\&=&
\e^{-\lambda\eps}
\prod_{j: k\le np_j}
\Phi(\lambda,k,n,p_j)
\\&\le&
\e^{-\lambda\eps}
\prod_{j: k\le np_j}
\exp(C_\Phi\lambda^2p_jk/n)
\\&\le&
\exp(C_\Phi\lambda^2k/n-\lambda\eps).
\eeq
Choosing $\lambda=\eps n/2kC_\Phi$ yields
\beqn
\label{eq:Xdev}
\P(X\ge\E[X]+\eps)\le\exp(-\eps^2n/4kC_\Phi).
\eeqn

\subsubsection*{Tail bounds for $Y$}
As done for $X$ in (\ref{eq:XX'}),
we invoke
negative association to obtain
\beqn
\label{eq:YY'}
\P(Y\ge\E[Y]+\eps)\le\P(Y'\ge\E[Y']+\eps),\qquad\eps>0,
\eeqn
where $Y'=
\sum_{j: k> np_j}p_j\xi_j'$
and the $\xi_j'\sim\mathrm{Ber}(q_j)$ are independent,
and
$q_j:=
\sum_{\ell=0}^{k-1}{n\choose\ell}p_j^\ell(1-p_j)^{n-\ell}
$.
In particular, $\E[Y]=\E[Y']$.

An application of Hoeffding's inequality yields
\beq
\P(Y'\ge\E[Y']+\eps)\le\exp\paren{-\frac{2\eps^2}
  {\sum_{j: k> np_j}p_j^2}};
\eeq
it remains to bound
$\sum_{j: k> np_j}p_j^2$.
To this end, we invoke H\"older's inequality:
$\nrm{x}_2^2\le \nrm{x}_\infty\nrm{x}_1$,
whence
\beqn
\sum_{j: k> np_j}p_j^2 \le
\frac{k}{n}.
\eeqn
It follows that
\beqn
\label{eq:Ydev}
\P(Y\ge\E[Y]+\eps) < \exp(-2\eps^2n/k).
\eeqn

From (\ref{eq:Xdev}), we have that
$
\P(X\ge\E[X]+\alpha\eps)\le
\exp(-\alpha^2\eps^2n/4kC_\Phi)
$
and from
(\ref{eq:Ydev}), that
$
\P(Y\ge\E[Y]+(1-\alpha)\eps) <
\exp(-2(1-\alpha)^2\eps^2n/k)
$.
The choice $\alpha=1/(1+(2\sqrt{2C_\Phi})\inv)\approx0.7878$
makes the two exponents equal:
\beq
\max\set{
\P(X\ge\E[X]+\alpha\eps)
,
\P(Y\ge\E[Y]+(1-\alpha)\eps)
}
\\< 
\exp(-0.09\eps^2k/n).
\eeq
Combining these with (\ref{eq:UXY})
concludes the proof.
\qed

\begin{proof}[Proof
    of Lemma~\ref{lem:mgf}]
Throughout the proof,
$n\ge1$, $p\in[0,1]$, $1\le k\le np$
and $q=q(k,n,p)$ as defined above.

As in the proof of Lemma 3.5(a) in \citet{ECP2359},
we invoke the Kearns-Saul inequality to obtain
\beq
q\exp(\lambda(p-pq))+(1-q)\exp(-\lambda pq) \le 
\\\exp[(1-2q)\lambda^2p^2/4\log[(1-q)/q]].
\eeq
Thus, to prove
the Lemma,
it suffices to show that
\beq
(1-2q)/\log[(1-q)/q]
\le
4C_\Phi k/np.
\eeq
Holding $\mu:=np$ fixed, put $x=k/\mu$ and reparametrize $q$ as $ y(x)=\exp(-\mu(x-1)^2/2)$;
our task is now reduced to proving
\beqn
\label{eq:repar}
\sup_{\mu\ge1}
\sup_{x\in[1/\mu,1]}
\frac{1-2 y(x)}{x\log[(1- y(x))/ y(x)]}
\le 4C_\Phi.
\eeqn
Note that $x\ge1/\mu$ implies $ y\ge\exp(-(\mu-1)^2/2\mu)$.
Reparametrize again via $ z:=\log(1/ y)
\le
(\mu-1)^2/2\mu
$;
now proving (\ref{eq:repar}) amounts to showing that
\beq
F( z) &:=& \frac{
  1-2\e^{- z}
}{
  \paren{ 1-\sqrt{2z/\mu}}
  \log(\e^{z}-1)}
\le 4C_\Phi,\\
&&
\text{for }
\mu\ge1,  z\in[0,(\mu-1)^2/{2\mu}].
\eeq
Our proof will not require this, but we note that $F$ is always non-negative;
this is clear from the parametrization in (\ref{eq:repar}).
To prove (\ref{eq:repar}), we consider the two cases $z<1$ and $ z\ge1$
below,
from which the estimate $C_\Phi\le
(2+\sqrt3)/4\log(\e-1)<1.73$
readily follows.

\paragraph{Case I: $ z<1$.}
This case will follow from the inequalities
\beqn
\label{eq:A/C}
\sup_{0<z<1}
\abs{
\frac{1-2\e^{-z}}
{\log(\e^{z}-1)}
}
\le\frac12
\eeqn
and
\beqn
\nonumber
\sup_{\mu\ge1}
\quad
\sup_{0<z<\min\set{1,\frac{(\mu-1)^2}{{2\mu}}}}
\abs{
  \frac{1}
       {
  1-\sqrt{2z/\mu}
       }
}\\\nonumber
\label{eq:1/B}
\\\le 2+\sqrt3\approx3.73;
\eeqn
combining them implies a bound of $F(z)\le1+\sqrt{3}/2\approx1.87$
over the specified range of $\mu$ and $z$.
Both (\ref{eq:A/C}) and 
(\ref{eq:1/B}) are straightforward exercises.
The former is facilitated by the reparametrization $(1-2/t)/\log(t-1)$
while the latter involves analyzing the two cases 
$(\mu-1)^2/{2\mu}\gtrless1$,
whose boundary is demarcated by
$\mu=2+\sqrt3$.

\paragraph{Case II: $z\ge1$.}
This case
is facilitated by
the trivial estimate
\beqn
\label{eq:t^2}
\sup_{t\ge1} \frac{t}{\log(\e^{t}-1)} &\le&
1/\log(\e-1)<
1.85.
\eeqn
Indeed, since
$|1-{2\e^{-z}}|\le1$, it follows from (\ref{eq:t^2})
that
\beq
F(z)\le
G(z):=\frac{1.85}{
z(1-\sqrt{2z/\mu})}
\eeq
over the specified range of $\mu$ and $z$, which is
$z\in[1,(\mu-1)^2/{2\mu}]$
and
$\mu\ge2+\sqrt{3}
$ (since for smaller $\mu$, the range of $z$ is empty).
Now the function $G(z,\mu):= z(1-\sqrt{2z/\mu})$ is unimodal in $z$ for
fixed $\mu$,
vanishing at $z=0$ and at $z=\mu/2$, and achieving a
positive
maximum
value
strictly inbetween. As the actual range of $z$ is
$1\le z\le (\mu-1)^2/2\mu
<\mu/2$ (the latter inequality holds for all
$\mu\ge1$),
to analyze the minimum of $G(\cdot,\mu)$,
we need only consider
the extreme feasible values
$z_1=1$ and $z_2= (\mu-1)^2/2\mu$.
A straightforward computation yields
\beq
\sup_{\mu\ge2+\sqrt{3}}\frac1{G(z_1,\mu)}
&=&
\sup_{\mu\ge2+\sqrt{3}}\frac1{1-\sqrt{2/\mu}}\\
&=&
\frac1{1-\sqrt{4-2\sqrt{3}}}\\
&=&2+\sqrt3
\eeq
and
\beq
\sup_{\mu\ge2+\sqrt{3}}\frac1{G(z_2,\mu)}
=
\sup_{\mu\ge2+\sqrt{3}}
\frac
{2\mu^2}
{(\mu-1)^2}
=2+\sqrt3.
\eeq
Combining these implies a bound of $F(z)\le 
(2+\sqrt3)/\log(\e-1)<6.9$
over the specified range of $\mu$ and $z$.
\end{proof}

\section{Proof of Theorem \ref{thm:const}}
Our proof closely follows the argument in \citet[Theorem 6.1]{MR1383093}.

Given a test point $x\in [0,1]^d$ drawn from $\mathcal{D}_{\mathcal{X}}$, and $g_n(x)=j$, We would like to know how many sample points are in the bucket $T(j)$, and what is the ratio of the near (i.e. at distance at most $<cr_n$) and distant (i.e. at distance at least $\geq cr_n$) points in the bucket. To deal with these questions, we first set some notations.
Given a test point $x \sim \mathcal{D}_{\mathcal{X}}$ and a hash function $g_n$, we denote by $A(x)$ the set of points from $S$ in the same bucket with $x$, and $N(x)$ is the size of that bucket. Formally, 
\begin{align*}
&A(x) =  \{x_i \in S_n | g_n(x_i)=g_n(x) \}  \\
&N(x)=\Sigma_{i=1}^{n}{\pred{x_i \in A(x)}}.
\end{align*}
In addition, for $r>0$ we denote by $\Aclose(x)$  the  set of near points from $S$ in the same bucket with $x$, 
\begin{align*}
\Aclose(x) = \{x_i \in S_n | g_n(x_i)=g_n(x), \nrm{x-x_i} < cr_n \} 
\end{align*}
and $\Afar(x)$ is the complementary $A(x) \setminus \Aclose(x)$. 
Finally, we define $\Nclose(x)$ and $\Nfar(x)$ as the cardinality of the sets $\Aclose(x)$ and $\Afar(x)$. Equipped with the preceding notations, we are now ready to prove the
Theorem \ref{thm:const}.

Define $\hat\eta_n(x) = \frac{1}{N(x)} \Sigma_{i:x_i \in A(x) }y_i$ and $\eta^* (x) = \mathbb{E}\big[\eta(x') \big| x' \in \Aclose(x)\big] $. By
\citet[Theorem 2.2]{MR1383093},
we have
\beq
\mathbb{E}[R(f_{g_n,T'})] - R^* \leq 2\mathbb{E}\big[|\hat\eta_n(x)-\eta(x) |\big].
\eeq
 By the triangle inequality,
\begin{align*}
\mathbb{E}\big[|\hat\eta_n(x)-\eta(x) |\big] \leq & 
\mathbb{E}\big[|\hat\eta_n(x)-\eta^*(x) |\big]\\ + & \mathbb{E}\big[|\eta^*(x) - \eta(x) |\big].
\end{align*}
By conditioning on the variables $\pred{x_i\in A(x)}$, $\pred{x_i \in \Aclose(x)}$, it is easy to see that $\Sigma_{i:x_i \in \Aclose(x) }y_i$ is distributed as $\mathrm{Bin}(\Nclose(x),\eta^*(x))$, a binomial random variable with parameters $\Nclose(x),\eta^*(x)$. Thus,
\begin{align*}
\mathbb{E}\Big[&|\hat\eta_n(x)-\eta^*(x)| \Hquad \Big|  \Hquad \pred{x_i\in A(x)}, \pred{x_i \in \Aclose(x)} \Big] \\ & \leq
\mathbb{E}\Big[|\frac{1}{N(x)} \Sigma_{i:x_i \in A(x) }y_i -\eta^*(x) | \Hquad \Big| \\& \Hquad \pred{x_i\in A(x)}, \pred{x_i \in \Aclose(x)} \Big] +\pred{N(x)=0} 
\\ & \leq \mathbb{E}\Big[|\frac{1}{N(x)} \Sigma_{i:x_i \in \Aclose(x) }y_i -\eta^*(x) | \Hquad \Big| \\& \Hquad \pred{x_i\in A(x)}, \pred{x_i \in \Aclose(x)} \Big] + \frac{\Nfar(x)}{N(x)}  +\\& \Hquad \pred{N(x)=0} \\&=\mathbb{E}\Big[|\frac{\mathrm{Bin}(\Nclose(x),\eta^*(x))-N(x)\eta^*(x)}{N(x)} | \Hquad \Big| \\& \Hquad \pred{x_i\in A(x)}, \pred{x_i \in \Aclose(x)} \Big] + \frac{\Nfar(x)}{N(x)}  +\\& \Hquad \pred{N(x)=0} = (*)+(**)+(***).
\end{align*}
By Cauchy-Schwarz we have
\begin{align*}
(*) &\leq \Big( \frac{1}{N(x)^2}\mathbb{E}\big[(\mathrm{Bin}(\Nclose(x),\eta^*(x))-N(x)\eta^*(x))^2\big] \Big)^{\frac{1}{2}} \\  &= \Big(\frac{1}{N(x)^2}\big(\mathbb{E}\big[\mathrm{Bin}(\Nclose(x),\eta^*(x))^2\big]- \\ &\quad\quad 2\Nclose(x)N(x)\eta^*(x)^2 +N(x)^2\eta^*(x)^2\big)\Big)^{\frac{1}{2}} \\& =  \Big( \frac{1}{N(x)^2}\big(\Nclose(x)\eta^*(x)(1-\eta^*(x)) \\&\quad\quad + \eta^*(x)^2(N(x)-\Nclose(x))^2) \big) \Big)^{\frac{1}{2}}.
\end{align*}
Hence,
\begin{align*}
(*) &\leq \sqrt{\frac{\Nclose(x)}{4N(x)^2} + \Big(\frac{\Nfar(x)}{N(x)}\Big)^2}
\\& \leq \sqrt{\frac{1}{4N(x)} + \Big(\frac{\Nfar(x)}{N(x)}\Big)^2}.
\end{align*}
Hence,
\begin{align*}
  \mathbb{E}\Big[&|\hat\eta_n(x)-\eta^*(x)| \Hquad \Big|  \Hquad \pred{x_i\in A(x)}, \pred{x_i \in \Aclose(x)} \Big] \\&\leq  \sqrt{\frac{1}{4N(x)} + \frac{(\Nclose(x)-N(x))^2}{N(x)^2}} +\frac{\Nfar(x)}{N(x)}
\\&
  +\pred{N(x)=0}.
\end{align*}

Taking expectations,
\begin{align*}
\mathbb{E}\big[|\hat\eta_n(x)-\eta^*(x) & |\big]  \leq
\\& \mathbb{E}\Big [\sqrt{\frac{1}{4N(x)} + \frac{\Nfar(x)^2}{N(x)^2}} + \frac{\Nfar(x)}{N(x)} \Big] \\& \quad + \mathbb{P}(N(x)=0)
\\& \leq  \Big(\sqrt{2}+2\Big)\Big(\mathbb{P}(N(x)<M) \\& \quad + \mathbb{P}(\Nfar(x)> \delta N(x))\Big) \\& \quad + \sqrt{\frac{1}{4M} + \delta^2} + \delta.
\end{align*}
For the second term,  $\mathbb{E}\big[|\eta^*(x) - \eta(x) |\big]$ we use the smoothness assumption on $\eta$.
Since $\eta^* (x) = \mathbb{E}\big[\eta(x')  \big|  \nrm{x-x'} \leq cr \big] $ then 
\beq
\eta(x)-L(cr_n)^{\alpha} \geq \eta^*(x) \leq \eta(x)+L(cr_n)^{\alpha} .
\eeq
Hence, 
\beq
\mathbb{E}\big[|\eta^*(x) - \eta(x) |\big]\leq L(cr_n)^{\alpha} .
\eeq
Now, by applying Lemmas \ref{lem:hash}, \ref{lem:hash2}, and setting
 $\delta=\sqrt{\frac{1}{n^{s-\frac{1}{2}}}},$ $M=\frac{n^{s-\frac{1}{2}}}{4}$ we get
\begin{align*}
\mathbb{E}[R(f_{T',g_n})]& - R^*  \leq
\\& 4\Bigg(
2\exp(-\frac{1}{8}n^{s-\frac{1}{2}}) +4\exp(-0.09n^{\frac{1-s}{2}})\\&+
2\Big(\frac{1.6}{n^{1-s}\sqrt{d}^d}\Big)^{\frac{1}{d+1}} + \sqrt{\frac{4}{n^{\frac{1-s}{2}}}}+ 2^{-\frac{n^{\frac{2s-1}{4}}}{2}}
 \Bigg) \\&+ \sqrt{\frac{1}{4M} + \delta^2} + \delta +  L(cr_n)^{\alpha} .
\end{align*} 
Finally, we set $s=\frac{d+5}{2d+6}$, and for $d\geq 3$, we get
by straightforward calculation, 
\begin{align*}
\mathbb{E}[R(f_{T',g_n})] - R^* & \leq\\
&48\exp(-0.09n^{\frac{1}{2d+6}}) +  \frac{73Lc^{\alpha}\sqrt{d}^{\frac{d+2}{d+1}}}{n^{\frac{\alpha}{2d+6}}},
\end{align*}
which completes the proof.

\appendix

\section{Proof of Lemma \ref{lem:hash}}
The following lemma states that the with high probability, the ratio $\frac{\Nfar(x)}{N(x)} \to 0$ as $n$ approaches  $\infty$.
Throughout this section, $B(x,r)$ denotes the closed Euclidean $r$-ball about $x$.

\begin{lemma}
  \label{lem:hash}
Let $x \sim \mathcal{D}_{\mathcal{X}}$. Then, for all $\delta>0$, $\frac{1}{2}<s <1$, the hash table $T$ calculated by Algorithm 1 satisfies:
\begin{align*}
  \mathbb{P}\Big(\Nfar(x)>& \delta N(x)\Big) \leq
  2\exp(-0.09n^{\frac{1-s}{2}}) +\Big(\frac{1.6}{n^{1-s}\sqrt{d}^d}\Big)^{\frac{1}{d+1}}
  + \sqrt{\frac{1}{n^{\frac{1-s}{2}}}} + \exp(-\frac{1}{8}n^{s-\frac{1}{2}})
+ 2^{-\frac{\delta}{2}n^{s-\frac{1}{2}}},
\end{align*}
where the probability is over $S_n,x \sim \mathcal{D}^{n+1}$ and the choice of the function $g_n$.
\end{lemma}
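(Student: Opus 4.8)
\medskip\noindent\emph{Proof plan.}
The plan is to split the bad event $\{\Nfar(x)>\delta N(x)\}$ into ``too few close points in the query's bucket'' and ``too many far points''. Since $\Nfar(x)>\delta N(x)=\delta(\Nclose(x)+\Nfar(x))$ forces $\Nfar(x)>\tfrac{\delta}{1-\delta}\Nclose(x)$, fixing a threshold $M'$ (which will be of order $n^{s-1/2}$) gives
\[
\{\Nfar(x)>\delta N(x)\}\ \subseteq\ \{\Nclose(x)<M'\}\ \cup\ \Big\{\Nfar(x)\ge\tfrac{\delta}{1-\delta}M'\Big\},
\]
so it suffices to bound these two probabilities. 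The auxiliary parameters ($k\asymp n^{s}$, $M'\asymp n^{s-1/2}$, $\eps\asymp n^{-(1-s)/4}$) will be pinned down at the end so that the five terms in the statement emerge.

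\smallskip\noindent\emph{Lower bound on $\Nclose(x)$ (first four terms).}
I would tile $\X=[0,1]^d$ by axis-parallel cubes of side $r_n/\sqrt d$; each has Euclidean diameter $r_n$, so the cube $Q(x)$ containing the query point lies inside $B(x,r_n)\subseteq B(x,cr_n)$, and every sample point sharing $Q(x)$ with $x$ is within $r_n$ of $x$. Let $P$ be the distribution $\mathcal D_{\X}$ induces on these cubes; its support satisfies $\|P\|_0\le\lceil\sqrt{d}/r_n\rceil^d$. Apply Theorem~\ref{thm:mss} to $P$ with $k:=\lceil n^{s}\rceil$: part (a) gives $\E_{S_n}[U_n^{(k)}]=\P_{S_n,x}\big(Q(x)\text{ contains fewer than }k\text{ sample points}\big)<1.6\,\|P\|_0k/n$, which the choice $w=r_n$ makes (up to rounding) equal to the second term $\big(1.6/(n^{1-s}\sqrt{d}^d)\big)^{1/(d+1)}=r_n/d$; part (b), with $\eps:=n^{-(1-s)/4}$, shows $U_n^{(k)}$ exceeds its mean by more than $\eps$ with probability at most $2\exp(-0.09\,n\eps^2/k)=2\exp(-0.09\,n^{(1-s)/2})$ (the first term), and the slack $\eps$ itself is the third term $\sqrt{1/n^{(1-s)/2}}$. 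Hence, outside these events, $Q(x)$ carries at least $k$ sample points, all within $r_n$ of $x$. Since $\mathcal H_n$ is $(r_n,cr_n,p_1,p_2)$-sensitive, each such point collides with $x$ under a single $h_i$ with probability $\ge p_1$, hence under $g_n=(h_1,\dots,h_{m_n})$ with probability $\ge p_1^{m_n}\ge n^{-1/2}$ by the choice $m_n=\lfloor\log n/(2\log(1/p_1))\rfloor$. Conditioning on $g_n$ and on $x$ makes the bucket‑memberships of the i.i.d.\ sample points independent, so $\Nclose(x)$ reduces to a binomial count with mean $\gtrsim k\,n^{-1/2}=n^{s-1/2}$, and a multiplicative Chernoff bound keeps $\Nclose(x)\ge\tfrac12 n^{s-1/2}$ except with probability $\le\exp(-\tfrac18 n^{s-1/2})$ (the fourth term).

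\smallskip\noindent\emph{Upper bound on $\Nfar(x)$ (last term).}
Here property S1 does the work: a point at distance $\ge cr_n$ collides with $x$ under each $h_i$ with probability $\le p_2$, hence under $g_n$ with probability $\le p_2^{m_n}$, and $p_1^2>p_2$ forces $p_2^{m_n}\le n^{-1}$ for large $n$, so $\E[\Nfar(x)]\le1$. Conditioning on $g_n$ and $x$, $\Nfar(x)$ is binomial with conditional mean controlled (outside a negligible set of hash functions) by its unconditional value $\le1$, and pushing a Poisson/Bernstein tail bound well above the mean yields $\Nfar(x)\le\tfrac\delta2 n^{s-1/2}$ except with probability $\le 2^{-\frac\delta2 n^{s-1/2}}$ (the last term). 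Taking $M'$ of order $n^{s-1/2}$ and summing the five probabilities (with the constants massaged into the exact stated form) completes the proof.

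\smallskip\noindent\emph{Main obstacle.}
The delicate point is reconciling the two independent sources of randomness. The $k$-missing-mass estimate of Theorem~\ref{thm:mss} is a statement about the i.i.d.\ sample, whereas the bucketing is a statement about the random hash $g_n$, and a priori the indicators $\pred{x_i\in g_n(x)}$ across $i$ are \emph{not} independent, so Chernoff-type arguments cannot be applied to $\Nclose(x)$ or $\Nfar(x)$ directly. Conditioning on $g_n$ (and on $x$) resolves this — those indicators then become indicators of a single fixed set evaluated at i.i.d.\ points, hence independent, and $\Nclose(x),\Nfar(x)$ become genuine binomials — but one must still guarantee, outside a set of small probability, both that the $\mathcal D_{\X}$-mass of $g_n(x)\cap B(x,cr_n)$ that drives $\Nclose(x)$ is not too small (which is exactly what the cube partition together with the missing-mass bound are engineered to furnish: $x$'s small cube is well populated by the sample, and with high probability over the random offsets of the $h_i$ it is not split across buckets) and that the conditional mean of $\Nfar(x)$ does not blow up. Carrying out this bookkeeping so that the ``bad $S_n$'', ``bad $x$'', and ``bad $g_n$'' contributions land precisely on the five stated terms is the crux of the argument.
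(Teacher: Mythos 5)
Your proposal is correct and follows essentially the same route as the paper: tile $[0,1]^d$ into cubes of side $r_n/\sqrt d$ so that the cube containing $x$ sits inside $B(x,r_n)$, apply Theorem~\ref{thm:mss}(a)--(b) with $k\asymp n^{s}$ to that cube-distribution to ensure $x$'s cube carries $\ge n^{s}$ sample points except on a small-probability event, then condition on $g_n$ and $x$ to make $\Nclose(x)$ and $\Nfar(x)$ binomial, apply a multiplicative Chernoff bound to $\Nclose$ with mean $\gtrsim n^{s}p_1^{m_n}\ge n^{s-1/2}$, and use $p_1^2>p_2$ to force $p_2^{m_n}=O(1/n)$ so the Chernoff upper tail on $\Nfar$ supplies the last term; the parameter choices ($k=n^s$, slack $\gamma=n^{-(1-s)/4}$, threshold $\asymp n^{s-1/2}$, $r_n$ as stated) also match. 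The only cosmetic differences are that the paper reaches the two sub-events via a chain of nested conditionings rather than your one-shot inclusion $\{\Nfar>\delta N\}\subseteq\{\Nclose<M'\}\cup\{\Nfar\ge\tfrac{\delta}{1-\delta}M'\}$ (the paper instead uses $N\ge\Nclose$ to pass from $\Nfar>\delta N$ to $\Nfar>\tfrac{\delta}{2}n^{s-1/2}$), and the paper's bound on $\E[\Nfar(x)]$ is $p_1^{-2}=O(1)$ rather than your stated $\le 1$ — a harmless discrepancy, as the Chernoff threshold is polynomially far above either constant.
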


\begin{proof}
 Fix $\delta>0$, $\eps = \frac{r_n}{\sqrt{d}}$, 
 and let  $C_1,\ldots,C_t$ be a
partition
of $[0,1]^d$
into $t=(\frac{1}{\eps})^d$
boxes of length $\eps$.
Notice that for any $x,x'$ in the same box, we have $\nrm{x-x'}\leq \sqrt{d}\eps$.
Put $k=n^{s}$
and
define
the random variable $L_{\eps,k}(S_n)=\Sigma_{i:|C_i \cap S_n|<k}\P(C_i)$,
and note that it is precisely the $k$-missing mass
(defined in (\ref{eq:Udef}))
associated with the distribution $P=(\P(C_1),\ldots,\P(C_t))$.
By Theorem~\ref{thm:mss}(a),
we have $\mathbb{E}[L_{\eps,k}(S_n)]\leq \frac{1.6kt}{n}$.
By the law of total probability,
\begin{align}
\mathbb{P}\Big(\Nfar(x)> & \delta N(x)\Big)
\leq \mathbb{P}\Big(L_{\eps,m}(S_n)>\frac{1.6}{\eps^d n^{1-s}} +\gamma \Big)
+ \mathbb{P}\Big(\Nfar(x)> \delta N(x)
\Big|
L_{\eps,m}(S_n) \leq  \frac{1.6}{\eps^d n^{1-s}} + \gamma \Big).
\label{dec1}
\end{align}
For the first term in (\ref{dec1}), we apply Theorem~\ref{thm:mss}(b):
\begin{align*}
  \mathbb{P}\Big(L_{\eps,m}(S_n)> & \frac{1.6}{\eps^d n^{1-s}} + \gamma  \Big)
  \leq
  \mathbb{P}\Big(L_{\eps,m}(S_n)> \mathbb{E}[L_{\eps,m}(S_n)] + \gamma \Big)
  \leq 2\exp\big(-0.09n^{1-s}\gamma^2\big).
\end{align*}
For the second term in (\ref{dec1}), we have
\begin{align}
\nonumber
\mathbb{P}\Big(\Nfar(x) & >  \delta N(x) \Hquad \Big|  \Hquad L_{\eps,m}(S_n) \leq  \frac{1.6}{\eps^d n^{1-s}} +\gamma \Big)
\\&
\leq \mathbb{P}\Big( |B(x,r_n)\cap S_n| < n^{s} \Hquad \Big|
L_{\eps,m}(S_n) \leq  \frac{1.6}{\eps^d n^{1-s}} + \gamma \Big)
\nonumber\\&+
\mathbb{P}\Big(\Nfar(x)> \delta N(x) ,\Hquad  |B(x,r_n)\cap S_n| \geq n^{s} \Hquad \Big|
\nonumber
L_{\eps,m}(S_n) \leq  \frac{1.6}{\eps^d n^{1-s}} + \gamma \Big)\\
& = (*) + (**). \label{dec2}
\end{align}

Since $r_n = \sqrt{d}\eps$, we have $\{|B(x,r_n)\cap S_n| < n^{s} \} \implies \{|C(x)\cap S_n | < n^{s}\}$, where $C(x)$ is the $\eps$-length box containing $x$. Thus,\begin{align*}
(*) \leq \frac{1.6}{\eps^d n^{1-s}} + \gamma. 
\end{align*}

We are left to bound the second term in (\ref{dec2})
\begin{align}
\nonumber
(**) \leq & 
\mathbb{P}\Big( \Nclose(x) < \frac{1}{2}n^{s-\frac{1}{2}} \Hquad \Big|
L_{\eps,m}(S_n) \leq  \frac{1.6}{\eps^d n^{1-s}} + \gamma, \Hquad|B(x,r_n)\cap S_n| > n^{s} \Big)
\\\nonumber +&\mathbb{P}\Big(\Nfar(x)> \delta N(x) ,\Nclose(x) \geq \frac{1}{2}n^{s-\frac{1}{2}}  \Hquad \Big|
L_{\eps,m}(S_n) \leq
\frac{1.6}{\eps^d n^{1-s}} + \gamma, \Hquad|B(x,r_n)\cap S_n| > n^{s}  \Big) \\ \label{dec3}
& = (***) + (****).
\end{align}
Since the algorithm set $m_n =\lfloor\frac{\log{n}}{2\log(\frac{1}{p_{1}})}\rfloor $, we have 
\begin{align*}
  \mathbb{E}\Big[\Nclose(x)\Big| |B(x,r)\cap S_n|>n^s \Big]
  &\geq   p_{1}^{m_n}n^s
  \\& \geq  p_{1}^{\frac{\log{n}}{2\log(\frac{1}{p_{1}})}}n^s
  \\&  \geq \big(2^{\log{p_{1}}}\big)^{\frac{1}{2\log{\frac{1}{p_{1}}}}\log{n}}n^s
  \\& \geq n^{s-\frac{1}{2}}.
\end{align*}
Let $Z\sim\mathrm{Bin}(n^s,p_{1}^{m_n})$. We have 
$\mathbb{E}\Big[\Nclose(x)\Big||B(x,r_n)\cap S_n|>n^s \Big]\geq\mathbb{E}[Z] = n^{s-\frac{1}{2}} $.
In addition, for each $x'\in \Aclose(x)$ we have $\mathbb{P}\big(g_n(x)=g_n(x')\big)\geq p_{1}^{m_n} $, and thus,
invoking the Chernoff bound,
\begin{align*}
  (***)& \leq
  \mathbb{P}(Z<\frac{1}{2} n^{s-\frac{1}{2}})\\
& = \mathbb{P}(Z<\frac{1}{2}\mathbb{E}[Z]) \\
&\leq \exp(-\frac{1}{8}\mathbb{E}[Z]) \\ & \leq \exp(-\frac{1}{8}n^{s-\frac{1}{2}}).
\end{align*}

The last term we have to bound is the second term in (\ref{dec3}). Notice that 
\begin{align*}
  \{\Nfar(x)> \delta N(x) ,\Nclose(x) \geq \frac{1}{2}n^{s-\frac{1}{2}}\}
  \implies \{\Nfar(x)> \frac{\delta}{2} n^{s-\frac{1}{2}}\}.
\end{align*}
In addition, since $p_{1}^2 > p_{2}$, we have

\beq
&\mathbb{E}\big[\Nfar(x)\big] \leq p_{2}^{m_n} n \leq  p_{1}^{2m_n} n \leq p_{1}^{2\big(\frac{\log{n}}{2\log{\frac{1}{p_1}}}-1\big)}n
= p_1^{-2} = O(1) .
\eeq
Since for each $x' \in \Afar(x)$ we have $\mathbb{P}\big(g_n(x)=g_n(x')\big) \leq p_{2}^{m_n}$,  if we let $Z\sim\mathrm{Bin}(n,p_{2}^{m_n})$
then, by Chernoff's bound,
\begin{align*}
(****) \leq \mathbb{P}(Z>\frac{\delta}{2} n^{s-\frac{1}{2}})
\leq 2^{\frac{\delta}{2} n^{s-\frac{1}{2}}}.
\end{align*}
For $s>\frac{1}{2}$ and large enough $n$ s.t. $2e\mathbb{E}\big[\Nfar(x)\big]\leq 2e\mathbb{E}[Z]\leq 2e \leq \frac{\delta}{2} n^{s-\frac{1}{2}}$.

Finally, setting $\gamma = \sqrt{\frac{1}{n^{\frac{1-s}{2}}}}$,
$r_n=\Big(\frac{1.6\sqrt{d}^{d+2}}{n^{1-s}}\Big)^{\frac{1}{d+1}}$ we conclude our proof. 
\end{proof}

\section{Proof of Lemma \ref{lem:hash2}}
Here we show that with high probability, the variable $N(x) \to \infty$. Namely, the number of sample points at each bucket is increasing as $n$ goes to $\infty$. 

\begin{lemma} 
\label{lem:hash2}
Let $x \sim \mathcal{D}_{\mathcal{X}}$ be a test point. Then, for all $M>0$, $\frac{1}{2}<s <1$ the hash table calculated by Algorithm 1 satisfies:
\begin{align*}
  \mathbb{P}\Big( N(x) &< M \Big) \leq
  \exp(-\frac{n^{s-\frac{1}{2}}}{2}+M) +2\exp(-0.09n^{\frac{1-s}{2}})
  + \Big(\frac{1.6}{n^{1-s}\sqrt{d}^d}\Big)^{\frac{1}{d+1}}+ \sqrt{\frac{1}{n^{\frac{1-s}{2}}}}.
\end{align*}
Where, again, the probability is over $S_n,x \sim \mathcal{D}^{n+1}$ and the choice of the function $g_n$.   \\
\end{lemma}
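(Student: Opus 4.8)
The plan is to reduce this to a pared-down version of the proof of Lemma~\ref{lem:hash}. The key observation is that $N(x)\ge\Nclose(x)$ always holds, so $\mathbb{P}(N(x)<M)\le\mathbb{P}(\Nclose(x)<M)$, and to bound the latter we only need to guarantee that enough \emph{close} sample points land in $x$'s bucket --- we never have to control $\Nfar(x)$, which is exactly why this lemma is simpler than Lemma~\ref{lem:hash}. First I would reuse verbatim the geometric setup of Lemma~\ref{lem:hash}: fix $\eps=r_n/\sqrt{d}$, partition $[0,1]^d$ into $t=(1/\eps)^d$ boxes of side $\eps$ (so two points sharing a box are within $\sqrt{d}\eps=r_n$ of each other), put $k=n^s$, and let $L_{\eps,k}(S_n)$ be the $k$-missing mass of the induced distribution $P=(\mathbb{P}(C_1),\ldots,\mathbb{P}(C_t))$. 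Theorem~\ref{thm:mss}(a) gives $\mathbb{E}[L_{\eps,k}(S_n)]\le1.6kt/n$, which under the prescribed $r_n=(1.6\sqrt{d}^{d+2}/n^{1-s})^{1/(d+1)}$ collapses to $(1.6/(n^{1-s}\sqrt{d}^{d}))^{1/(d+1)}$.

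Next I would apply the law of total probability on the event $\{L_{\eps,k}(S_n)\le\mathbb{E}[L_{\eps,k}(S_n)]+\gamma\}$. On its complement Theorem~\ref{thm:mss}(b) contributes $2\exp(-0.09\,n^{1-s}\gamma^2)$, which with $\gamma=n^{-(1-s)/4}=\sqrt{1/n^{(1-s)/2}}$ becomes $2\exp(-0.09\,n^{(1-s)/2})$; the term $\gamma$ itself is the remaining $\sqrt{1/n^{(1-s)/2}}$ summand. On the event itself, the box $C(x)$ containing $x$ holds fewer than $n^s$ sample points only with probability at most $\mathbb{E}[L_{\eps,k}(S_n)]+\gamma$ (already counted), while whenever $|C(x)\cap S_n|\ge n^s$ we get $|B(x,r_n)\cap S_n|\ge n^s$. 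Conditioned on the latter, the choice $m_n=\lfloor\log n/(2\log(1/p_1))\rfloor$ forces $\mathbb{E}[\Nclose(x)\mid|B(x,r_n)\cap S_n|\ge n^s]\ge p_1^{m_n}n^s\ge n^{s-1/2}$, and since each of the $\ge n^s$ points of $B(x,r_n)\cap S_n$ collides with $x$ under $g_n$ independently with probability $\ge p_1^{m_n}$ (by $(r_n,cr_n,p_1,p_2)$-sensitivity), $\Nclose(x)$ stochastically dominates $Z\sim\mathrm{Bin}(n^s,p_1^{m_n})$ with $\mathbb{E}[Z]\ge n^{s-1/2}$. Hence $\mathbb{P}(\Nclose(x)<M)\le\mathbb{P}(Z<M)$.

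It remains to bound $\mathbb{P}(Z<M)$, for which I would use the one-sided Chernoff estimate with exponential parameter $\lambda=1$: $\mathbb{P}(Z<M)\le e^{M}\,\mathbb{E}[e^{-Z}]\le\exp\!\big(M-\mathbb{E}[Z](1-e^{-1})\big)\le\exp\!\big(M-\mathbb{E}[Z]/2\big)\le\exp(-n^{s-1/2}/2+M)$, using $1-e^{-1}>1/2$ and monotonicity in $\mathbb{E}[Z]$. Summing the four contributions yields the stated bound. There is no real conceptual obstacle: every nontrivial ingredient (the $k$-missing-mass expectation and concentration, the stochastic-domination and collision-probability argument, and the cancellation of the box-count probability) is already in place from Lemma~\ref{lem:hash}; the only mildly tedious step is the algebraic bookkeeping that makes $1.6kt/n$, $\gamma$, and $2\exp(-0.09\,n^{1-s}\gamma^2)$ come out as exactly the expressions in the statement.
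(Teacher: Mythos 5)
Your proposal is correct and follows essentially the same route as the paper: the same box partition and $k$-missing-mass conditioning via Theorem~\ref{thm:mss}, the same reduction $N(x)\ge\Nclose(x)$, and the same stochastic comparison of $\Nclose(x)$ with $Z\sim\mathrm{Bin}(n^s,p_1^{m_n})$. The only (immaterial) difference is the final Chernoff step: you bound $\mathbb{P}(Z<M)$ directly from the MGF with $\lambda=1$, whereas the paper sets $\xi=1-M/\mathbb{E}[Z]$ and invokes the multiplicative lower-tail Chernoff $\mathbb{P}(Z<(1-\xi)\mathbb{E}[Z])\le\exp(-\xi^2\mathbb{E}[Z]/2)$; both give $\exp(-n^{s-1/2}/2+M)$ after expanding (and are vacuous when $M\ge\mathbb{E}[Z]/2$, which is fine).
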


\begin{proof}
Fix $M>0$. Similar to Lemma \ref{lem:hash}, we have
\begin{align}
\nonumber
\mathbb{P}\Big( N(x) < M \Big) \leq
&
\mathbb{P}\Big( N(x) < M \Big|
L_{\eps,m}(S_n) \leq
\frac{1.6}{\eps^d n^{1-s}},\Hquad |B(x,r_n)\cap S_n| > n^{s} \Big) \\\label{le5dec1} &+2\exp(-0.09n^{\frac{1-s}{2}}) +
\frac{1.6 \sqrt{d}^d}{r_n^d n^{1-s}}+ \sqrt{\frac{1}{n^{\frac{1-s}{2}}}}.
\end{align}
We only have to bound the first term in (\ref{le5dec1}). 
Observe that 
\beq
\{N(x) < M \} \implies \{\Nclose(x)<M\}
\eeq
and that $\mathbb{E}\big[\Nclose(x)\big| |B(x,r_n \cap S_n )| > n^s \big] \geq \mathbb{E}[Z] = p_{1}^{m_n} n^{s}=n^{s-\frac{1}{2}}$.
Now
for $Z\sim \mathrm{Bin}(n^s,p_{1}^{m_n})$, if we let $\xi = 1-\frac{M}{\mathbb{E}[Z]}$, then by Chernoff's bound we have, 
\begin{align*}
  \mathbb{P}\Big( \Nclose(x)  < M \Big|
  L_{\eps,m}(S_n)
  \leq
  \frac{1.6}{\eps^d n^{1-s}},|B(x,r_n)\cap S_n| > n^{s} \Big)
  &\leq \mathbb{P}(Z<(1-\xi)\mathbb{E}[Z]) \\
&\leq \exp(-\frac{\xi^2}{2}\mathbb{E}[Z])\\&\leq  \exp(-\frac{n^{s-\frac{1}{2}}}{2}+M).
\end{align*}
\end{proof}

\end{document}